\newtheorem{theorem}{Theorem}[section]
\newtheorem{lemma}[theorem]{Lemma}
\theoremstyle{definition}
\newtheorem{definition}[theorem]{Definition}
\newtheorem{proposition}[theorem]{Proposition}
\newtheorem{corollary}[theorem]{Corollary}
\theoremstyle{remark}
\numberwithin{equation}{section}
\begin{document}

\title[the porous medium equation with  potential under  geometric flow]{ Harnack estimates  for  the porous medium equation with  potential under  geometric flow }

\author{Shahroud Azami}
\address{Department of Mathematics, Faculty of Sciences
Imam Khomeini International University,
Qazvin, Iran. }

\email{azami@sci.ikiu.ac.ir}



\subjclass[2010]{ 53C21;  53C44; 58J35.}



\keywords{Harnack estimates, Geometric flow, Porous medium equation.}
\begin{abstract}
Let $(M, g(t))$, $t\in[0,T)$ be a closed Riemannian $n$-manifold whose Riemannian metric  $g(t)$ evolves by the geometric  flow
$
\frac{\partial }{\partial t} g_{ij}=-2S_{ij}
$,
where $S_{ij}(t)$  is a symmetric  two-tensor  on $(M,g(t))$. We discuss differential Harnack estimates for  positive solution to the porous medium equation with  potential, $\frac{\partial u}{\partial t}=\Delta u^{p}+S u$, where $S=g^{ij}S_{ij}$ is the  trace of $S_{ij}$, on time-dependent Riemannian metric evolving by the above geometric flow.
\end{abstract}

\maketitle
\section{Introduction}
There are many  results about the  Harnack estimates for parabolic equations. The study  of differential  Harnack estimates  and applications for parabolic equation  originated in the famous paper \cite{PLY} of Li and Yau, in which  they discoverd the celebrated differential Harnack estimate for any positive  solution  to the heat equation with potential on Riemannian manifolds with a fixed Riemannian metric. After then, this method  plays an important role  in the study  of geometric flows, for instance, Hamilton  proved  Harnack inequalities for the  Ricci flow on Riemannian manifolds with weakly positive curvature operator \cite{H1} and mean curvature flow \cite{H2}, also see \cite{HD, BC}. Also, recently many authors obtained  a differential Harnack estimate for  solutions of the parbolic  equation on Riemannian manifold along the geometric flow, for instance, Fang in \cite{SF}, proved  differential Harnack estimates for  backward heat equation with potentials under an extended Ricci flow and Ishida in \cite{MI} studied  differential Harnack estimates for   heat equation with potentials along  the geometric flow.

Let $M$  be a closed Riemannian manifold with a one parameter family of Riemannian  metric   $g(t)$ evolving  by the geometric  flow
\begin{equation}\label{1}
\frac{\partial }{\partial t} g_{ij}(x,t)=-2S_{ij}(x,t)
\end{equation}
where $S_{ij}$  is a general  time-dependent symmetric  two-tensor  on $(M,g(t))$. For example, (\ref{1}) becomes Ricci flow whenever  $S_{ij}=R_{ij}$ is the Ricci tensor, where it introduced by Hamilton \cite{H3}.

In \cite{CZ}, Cao and Zhu obtained Aronson-B\'{e}nilan estimates  for  the porous medium equation  (PME) with potential
\begin{equation}\label{5}
\frac{\partial u}{\partial t} =\Delta u^{p}+R u
\end{equation}
along the  Ricci flow, where  $R$ is the scalar curvature of $M$. Differential equations (\ref{5}) is a nonlinear parabolic equation and has applications in mathematics and physics.  For $p>1$ differential  equations  PME describes physical processes of gas through porous medium, heat radiation in plasmas (\cite{LV}). Motivated by the above works, in this paper, we consider equation of type (\ref{5}) with a linear forcing term
 \begin{equation}\label{6}
\frac{\partial u}{\partial t} =\Delta u^{p}+Su
\end{equation}
under the geometric flow ( \ref{1}), where  $S=g^{ij}S_{ij}$, $\Delta $ is Laplace operator with n respect to the evolving metric $g(t)$ of the geometric flow ( \ref{1}) and prove  differential Harnack estimates for  positive solutions to  (\ref{6}).
Notice also that for any smooth solution  $u$  of (\ref{6})  we have
$$\frac{\partial }{\partial t}(\int_{M}u\,d\mu)=\int_{M}\frac{\partial u}{\partial t}d\mu+u\frac{\partial d\mu}{\partial t}=\int_{M}(\frac{\partial u}{\partial t}-Su)d\mu =\int_{M} \Delta u^{p}d\mu=0.$$
For $p=1$, (\ref{6}) is simply the equation
 \begin{equation}\label{6a}
\frac{\partial u}{\partial t} =\Delta u+Su,
\end{equation}
where differential Harnack estimates for positive solution  to (\ref{6a}) have been studied in  \cite{MI}.
 Suppose that $u$ is positive solution of  (\ref{6})  and $v=\frac{p}{p-1}u^{p-1}$. Then  we can rewrite (\ref{6})  as follows
\begin{equation}\label{7}
\frac{\partial v}{\partial t} =(p-1)v\Delta v+|\nabla v|^{2}+(p-1)Sv.
\end{equation}
To state the main results of the current article, analogous to definition from M\"{u}ller  (\cite{M}) we introduce  evolving  tensor quantises associated with  the tensor  $S_{ij}$.

\begin{definition}
 Let $g(t)$  be a solution  of the geometric flow (\ref{1})  and let $X=X^{i}\frac{\partial }{\partial x^{i}}\in\mathcal{X}(M)$ be  a vector field  on $(M, g(t))$. We define
\begin{eqnarray}\label{2}
\mathcal{I}(S,X)&=&(R^{ij}-S^{ij})X_{i}X_{j},\\\label{3}
\mathcal{H}(S,X)&=&\frac{\partial S}{\partial t}+\frac{S}{t}-2\nabla_{i}SX^{i}+2S^{ij}X_{i}X_{j},\\
\label{4}
\mathcal{D}(S)&=&\frac{\partial S}{\partial t}-\Delta S-2|S_{ij}|^{2},\\
\mathcal{E}(S,X)&=&\mathcal{D}(S)+2\mathcal{I}(S,X)+2(2\nabla^{i}S_{ij}-\nabla_{j}S)X^{j}.
\end{eqnarray}
\end{definition}
\section{Main results}
The main results of this paper are the following.
\begin{theorem}\label{t1}
Let $g(t)$, $t\in[0,T)$ be a solution to the geometric flow (\ref{1}) on a closed   Riemannian $n$-manifold $M$ satisfying
\begin{equation}
\mathcal{E}(S,X)\geq0,\,\,\,\mathcal{H}(S,X)\geq0,\,\,\,Ric\geq -(n-1)k_{1},\,\,\,\,-k_{2}g\leq S_{ij}\leq k_{3}g,\,\,\,\,\,S\geq0
\end{equation}
for all vector fields $X$ and  all time $t\in[0,T)$. Suppose $u$ is  a smooth positive  solution  to equation (\ref{6}) with $p>1$  and $v=\frac{p}{p-1}u^{p-1}$. Then
for any $d\in[2,\infty)$, on the geodesic ball $\mathcal{Q}_{\rho,T}$, we have
\begin{equation}\label{ft1}
\frac{|\nabla v|^{2}}{v}-2\frac{v_{t}}{v}-\frac{S}{v}-\frac{d}{t}\leq \frac{2n(p-1)}{1+n(p-1)}(\frac{E_{1}v_{\max}}{\rho^{2}}+E_{2})
\end{equation}
where
$E_{1}=\big(p^{2}n+\frac{1}{2}\sqrt{k_{1}}\rho+\frac{9}{4}\big)c_{1}(p-1)$,
$E_{2}=\sqrt{c_{2}}(k_{2}+k_{3})^{2}+1$ and $c_{1},c_{2}$ are absolute positive  constants.
\end{theorem}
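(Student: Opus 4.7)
The plan is to carry out a localized Li--Yau argument on the pressure variable. Recall that $v:=\tfrac{p}{p-1}u^{p-1}$ satisfies (\ref{7}). I would introduce the Harnack quantity
\[
F:=\frac{|\nabla v|^{2}}{v}-2\frac{v_{t}}{v}-\frac{S}{v}-\frac{d}{t},
\]
so that the conclusion (\ref{ft1}) is precisely an upper bound for $F$ on $\mathcal{Q}_{\rho,T}$.

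Next, I would compute $\big(\partial_{t}-(p-1)v\Delta\big)F$ using (\ref{7}) together with the time-dependent Bochner formula. Differentiating $|\nabla v|^{2}$ in $t$ contributes the extra tensorial term $2S^{ij}v_{i}v_{j}$, while commuting $\Delta$ with $\nabla$ produces $R^{ij}v_{i}v_{j}$; the $v^{-1}$ factors in $F$ naturally introduce the vector field $X:=\nabla v/v$ of (\ref{2})--(\ref{4}). With this choice the Ricci and $S^{ij}$ terms, the derivative $\partial_{t}S$, the terms $\Delta S$ and $\nabla^{i}S_{ij}$ and $\nabla_{j}S$ combine to form exactly $\mathcal{E}(S,X)$ and $\mathcal{H}(S,X)$, both nonnegative by hypothesis and hence discardable. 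After invoking the pointwise bound $|\nabla^{2}v|^{2}\geq(\Delta v)^{2}/n$ to control the Hessian square, the computation should yield a differential inequality of the form
\[
\big(\partial_{t}-(p-1)v\Delta\big)F \;\geq\; \frac{2\bigl(1+n(p-1)\bigr)}{n(p-1)}\,\frac{F^{2}}{v}\;-\;A\,F\;-\;B,
\]
with $A,B$ controlled by $k_{1},k_{2},k_{3}$ (through the factors $-k_{2}g\leq S_{ij}\leq k_{3}g$ and the non-negativity $S\geq 0$, which in particular makes $-S/v\leq 0$).

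To turn this into a pointwise bound on the smaller ball, I would bring in a smooth cutoff $\eta$ supported in $B_{2\rho}$ with $\eta\equiv 1$ on $B_{\rho}$ and $|\nabla\eta|^{2}/\eta\leq c_{1}/\rho^{2}$; the Laplacian comparison theorem, applied under $\mathrm{Ric}\geq-(n-1)k_{1}$, gives $\Delta\eta\geq -(c_{1}/\rho^{2})\bigl(\sqrt{k_{1}}\rho+1\bigr)$, while $-k_{2}g\leq S_{ij}\leq k_{3}g$ is used to control $\partial_{t}\eta$ via the time derivative of the distance function in the evolving metric. Applying the maximum principle to $t\eta F$ at an interior maximum $(x_{0},t_{0})\in\mathcal{Q}_{\rho,T}$, the relations $\nabla(t\eta F)=0$ and $\big(\partial_{t}-(p-1)v\Delta\big)(t\eta F)\geq 0$ convert the PDE into a quadratic inequality $a(t\eta F)^{2}\leq b(t\eta F)+c$ whose coefficients absorb $v_{\max}$, $\rho$ and the $k_{i}$. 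The elementary implication $ax^{2}\leq bx+c\Rightarrow x\leq b/a+\sqrt{c/a}$ then gives the announced bound with the coefficient $\tfrac{2n(p-1)}{1+n(p-1)}$ and the explicit constants $E_{1},E_{2}$.

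The main obstacle I foresee is the bookkeeping in the Bochner step: the numerous $\nabla S$, $\partial_{t}S$, $\Delta S$ and Ricci terms generated by differentiating $F$ must be grouped so that exactly the combinations $\mathcal{E}(S,X)$ and $\mathcal{H}(S,X)$ appear (with the correct sign), and the coefficient $\tfrac{2(1+n(p-1))}{n(p-1)}$ in front of $F^{2}/v$ has to be tracked precisely, since its reciprocal is the prefactor $\tfrac{2n(p-1)}{1+n(p-1)}$ in (\ref{ft1}). The cutoff argument itself, while technically more involved than in the static case because of the evolving metric, is by now standard and contributes only lower-order terms producing the $E_{1}v_{\max}/\rho^{2}$ and $E_{2}$ pieces of the bound.
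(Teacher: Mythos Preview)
Your strategy is essentially the paper's: compute $\mathcal{L}F$ for the operator $\mathcal{L}=\partial_t-(p-1)v\Delta$, absorb the curvature-type terms into $\mathcal{H}(S,X)$ and $\mathcal{E}(S,X)$, use the trace inequality on the Hessian, localize with a radial cutoff and the Laplacian comparison theorem, and finish with the maximum principle and a quadratic inequality. The paper in fact carries this out for a general parameter $b\ge 2$ (Proposition~\ref{p1}) and then sets $b=2$, but going directly to $b=2$ as you do is perfectly adequate for Theorem~\ref{t1}.

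Two small slips in your write-up are worth flagging. First, the differential inequality should read $\mathcal{L}F\le -cF^{2}+\text{(lower order)}$, not $\ge$; it is the combination of this with $\mathcal{L}(t\eta F)\ge 0$ at a positive maximum that yields the quadratic bound you describe. Second, the paper completes the square as $|\nabla^{2}v+S_{ij}|^{2}\ge\frac{1}{n}(\Delta v+S)^{2}$ rather than the plain $|\nabla^{2}v|^{2}\ge(\Delta v)^{2}/n$, and takes $X=\nabla v$ (not $\nabla v/v$); these choices are what produce the exact coefficient $\frac{1+n(p-1)}{2n(p-1)}$ in front of $F^{2}$ (there is no $1/v$) whose reciprocal gives the prefactor in~(\ref{ft1}). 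None of this affects the correctness of the overall plan.
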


Let $\rho\to\infty$, we can  get the gradient estimates for the nonlinear parabolic equation (\ref{6}).
\begin{corollary}\label{c1}
Let $g(t)$, $t\in[0,T)$ be a solution to the geometric flow (\ref{1}) on a closed   Riemannian $n$-manifold $M$ satisfying
\begin{equation}
\mathcal{E}(S,X)\geq0,\,\,\,\mathcal{H}(S,X)\geq0,\,\,\,Ric\geq -(n-1)k_{1},\,\,\,\,-k_{2}g\leq S_{ij}\leq k_{3}g,\,\,\,\,\,S\geq0
\end{equation}
for all vector fields $X$ and  all time $t\in[0,T)$. Suppose $u$ is  a bounded  smooth positive  solution  to equation (\ref{6}) with $p>1$  and $v=\frac{p}{p-1}u^{p-1}$. Then
for any $d\in[2,\infty)$, on the geodesic ball $\mathcal{Q}_{\rho,T}$, we have
\begin{equation}\label{ft2}
\frac{|\nabla v|^{2}}{v}-2\frac{v_{t}}{v}-\frac{S}{v}-\frac{d}{t}\leq \frac{2n(p-1)}{1+n(p-1)}E_{2}
\end{equation}
where
$E_{2}=\sqrt{c_{2}}(k_{2}+k_{3})^{2}+1$ and $c_{2}$ is absolute positive  constant.

\end{corollary}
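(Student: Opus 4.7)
The plan is to obtain Corollary \ref{c1} as an immediate consequence of Theorem \ref{t1} by sending the radius $\rho$ of the geodesic ball to infinity. Fix any point $x \in M$ and $t \in (0,T)$, and apply the local estimate (\ref{ft1}) on $\mathcal{Q}_{\rho,T}$ centered at $x$, evaluated at the space-time point $(x,t)$. Since all quantities on the left-hand side of (\ref{ft1}) are evaluated at $(x,t)$, they are independent of $\rho$, so it suffices to analyze the right-hand side as $\rho \to \infty$.

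The only $\rho$-dependent piece of the upper bound is $\tfrac{E_{1} v_{\max}}{\rho^{2}}$. Because $u$ is assumed to be bounded on $M \times [0,T)$, so is the auxiliary function $v = \tfrac{p}{p-1}u^{p-1}$; set $V := \sup_{M \times [0,T)} v < \infty$, which dominates $v_{\max}$ uniformly in $\rho$. Moreover, from the formula $E_{1} = \bigl(p^{2}n + \tfrac{1}{2}\sqrt{k_{1}}\,\rho + \tfrac{9}{4}\bigr) c_{1}(p-1)$ provided by Theorem \ref{t1}, the constant $E_{1}$ grows at most linearly in $\rho$. Consequently
\[
\frac{E_{1} v_{\max}}{\rho^{2}} \;\leq\; \frac{E_{1} V}{\rho^{2}} \;=\; O(\rho^{-1}) \qquad \text{as } \rho \to \infty,
\]
so this term vanishes in the limit, while $E_{2}$ is independent of $\rho$.

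Passing to the limit $\rho \to \infty$ in (\ref{ft1}) therefore yields exactly (\ref{ft2}) at $(x,t)$; since $(x,t)$ was arbitrary, the estimate holds pointwise on $M \times (0,T)$. There is no genuine obstacle here once Theorem \ref{t1} is in hand: the corollary is essentially a one-line limit argument, and the sole role of the boundedness assumption on $u$ is to keep the factor $v_{\max}$ in check so that dividing by $\rho^{2}$ drives the extra term to zero.
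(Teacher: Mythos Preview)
Your argument is correct and matches the paper's own proof, which is simply the one-line observation that letting $\rho\to\infty$ in Theorem~\ref{t1} (using the boundedness of $u$, hence of $v$) yields~(\ref{ft2}). Your extra remark that $E_{1}$ grows only linearly in $\rho$, so $E_{1}v_{\max}/\rho^{2}=O(\rho^{-1})$, makes explicit a point the paper leaves to the reader.
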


As an application, we get the following Harnack inequality for $v$.
\begin{theorem}\label{t2}
With the same assumption  as in Corollary \ref{c1}, if $d\geq 2$, then  for any points $(x_{1},t_{1})$ and $(x_{2}, t_{2})$ on $M\times [0,T)$ with $0<t_{1}<t_{2}$ we have the following estimate
\begin{equation}\label{ftt1}
v(x_{1},t_{1})\leq v(x_{2},t_{2})(\frac{t_{2}}{t_{1}})^{\frac{d}{2}}exp\big(\frac{\Gamma}{2v_{\min}}+(\frac{n(p-1)}{1+n(p-1)}E_{2})(t_{2}-t_{1}) \big)
\end{equation}
where $E_{2}$ is the  constants  in Corollary \ref{c1} and $\Gamma=\inf_{\gamma}\int_{t_{1}}^{t_{2}}(S+|\frac{d\gamma}{dt}|^{2})dt$ with the infimum taking over all smooth curves $\gamma(t)$ in $M$, $t\in[t_{1},t_{2}]$, so that $\gamma(t_{1})=x_{1}$ and $\gamma(t_{2})=x_{2}$.
\end{theorem}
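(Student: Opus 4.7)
The plan is to follow the standard Li--Yau path-integration scheme, adapting it to the nonlinear quantity $v=\frac{p}{p-1}u^{p-1}$ using the gradient estimate furnished by Corollary \ref{c1}. First I would fix an arbitrary smooth curve $\gamma\colon[t_1,t_2]\to M$ with $\gamma(t_1)=x_1$, $\gamma(t_2)=x_2$ and consider the one-variable function
\[
\eta(t)\ :=\ \log v(\gamma(t),t),\qquad t\in[t_1,t_2].
\]
Its derivative along $\gamma$ is $\dot\eta(t)=\frac{v_t}{v}+\frac{\langle\nabla v,\dot\gamma\rangle}{v}$, so everything reduces to bounding this from below using the estimate of Corollary \ref{c1}.

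Next, rearranging the conclusion of Corollary \ref{c1} yields
\[
\frac{v_t}{v}\ \ge\ \frac{1}{2}\frac{|\nabla v|^2}{v}-\frac{S}{2v}-\frac{d}{2t}-\frac{n(p-1)}{1+n(p-1)}E_2 .
\]
For the $\langle\nabla v,\dot\gamma\rangle/v$ term I would apply the elementary Young inequality in the form
\[
\Bigl|\frac{\langle\nabla v,\dot\gamma\rangle}{v}\Bigr|\ \le\ \frac{1}{2}\frac{|\nabla v|^2}{v}+\frac{|\dot\gamma|^2}{2v},
\]
which is designed so that the positive $\tfrac12|\nabla v|^2/v$ coming from the gradient estimate exactly cancels the bad term produced here. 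Combining the two inequalities gives the clean bound
\[
\dot\eta(t)\ \ge\ -\frac{S+|\dot\gamma|^2}{2v_{\min}}-\frac{d}{2t}-\frac{n(p-1)}{1+n(p-1)}E_2 .
\]

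Integrating this differential inequality from $t_1$ to $t_2$ produces
\[
\log\frac{v(x_2,t_2)}{v(x_1,t_1)}\ \ge\ -\frac{1}{2v_{\min}}\!\int_{t_1}^{t_2}\!\bigl(S+|\dot\gamma|^2\bigr)\,dt\;-\;\frac{d}{2}\log\frac{t_2}{t_1}\;-\;\frac{n(p-1)}{1+n(p-1)}E_2\,(t_2-t_1),
\]
and exponentiating, then taking the infimum of the $\int(S+|\dot\gamma|^2)\,dt$ term over all admissible curves $\gamma$ yields precisely the asserted Harnack inequality \eqref{ftt1}.

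I do not expect a serious obstacle: once the gradient estimate of Corollary \ref{c1} is in hand the argument is essentially bookkeeping. The only point that needs mild care is the correct weighting in Young's inequality so that the coefficient in front of $|\nabla v|^2/v$ matches the one available from the gradient estimate; if one is sloppy there one picks up either a spurious constant or fails to eliminate the gradient term altogether. The use of $v_{\min}$ (legitimate since $u$, and hence $v$, is assumed bounded in Corollary \ref{c1}) is what allows the $S+|\dot\gamma|^2$ contribution to be separated from the unknown $v$ and recognized as the infimum defining $\Gamma$.
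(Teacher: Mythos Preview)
Your proposal is correct and follows essentially the same approach as the paper: both integrate $\frac{d}{dt}\log v(\gamma(t),t)$ along an arbitrary curve, invoke Corollary~\ref{c1} to bound $v_t/v$ from below, apply the Young inequality $\langle\nabla v,\dot\gamma\rangle\ge -\tfrac12|\nabla v|^2-\tfrac12|\dot\gamma|^2$ (divided by $v$) so that the $|\nabla v|^2/v$ terms cancel, replace $v$ by $v_{\min}$ in the remaining negative terms, integrate, exponentiate, and take the infimum over curves. Your write-up is in fact a bit more careful with signs than the paper's own version.
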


Our results in this article are similar to those of Cao and Zhu \cite{CZ} in the case $S_{ij}=R_{ij}$.
\section{Examples}
\subsection{Static Riemannian manifold}
In this case we have $S_{ij}=0$ and $S=0$. Then $\mathcal{D}=0$,  $\mathcal{H}(S,X)=0$ and  $\mathcal{I}(S,X)=R^{ij}X_{i}X_{j}$. Thus the assumption  in Theorems \ref{t1}, \ref{t2} and Corollary \ref{c1} can be replace by $R_{ij}\geq 0$.
\subsection{The Ricci flow}
The Ricci flow defined for the first time  by Haimlton as follow
\begin{equation*}
\frac{\partial }{\partial t}g_{ij}=-2R_{ij}.
\end{equation*}
In this case we get $S_{ij}=R_{ij}$ and $S=R$ the scalar curvature. Along the Ricci flow we have
\begin{equation*}
\frac{\partial R}{\partial t}=\Delta R+2|Ric|^{2},\,\,\,\,\,\,2\nabla^{i}R_{il}-\nabla_{l}R=0.
\end{equation*}
Therefore we obtain
\begin{equation*}
\mathcal{I}(S,X)=0,\,\,\,\mathcal{D}(S)=0,\,\,\,\mathcal{E}(S,X)=0,\,\,\,\mathcal{H}(S,X)=\frac{\partial R}{\partial t}+\frac{R}{t}-2\nabla_{i}RX^{i}+2R^{ij}X_{i}X_{j}.
\end{equation*}
Notice that for any vector field $X=X^{i}\frac{\partial }{\partial x^{i}}$ on $M$, if $g(t)$ be complete solution to the Ricci flow with bounded curvature  and nonnegative  curvature operator then from \cite{H1} we have  $\mathcal{H}(S,X)\geq 0$, that is  $g(t)$ has weakly positive curvature operator. Hence, the assumption in Theorems \ref{t1}, \ref{t2} and Corollary \ref{c1} hold.
\subsection{List's extended Ricci flow }
Extended  Ricci flow defined by List in  \cite{BL} as follows
\begin{equation*}
\begin{cases}
\frac{\partial }{\partial t}g_{ij}=-2R_{ij}+4\nabla_{i}f\nabla_{j}f,&\\
\frac{\partial f}{\partial t}=\Delta f,&(g(0),f(0))=(g_{0},f_{0}),
\end{cases}
\end{equation*}
where $f:M\to \mathbb{R}$ is a smooth function. In this case, $S_{ij}=R_{ij}-2\nabla_{i}f\nabla_{j}f$ and $S=R-2|\nabla f|^{2}$. Along the extended Ricci flow we have
\begin{equation*}
\frac{\partial S}{\partial t}=\Delta S+2|Ric|^{2}+4|\Delta f|^{2},\,\,\,\,\,\,2\nabla^{i}S_{il}-\nabla_{l}S+4\Delta f\nabla_{l} f=0.
\end{equation*}
Therfore we obtain
\begin{equation*}
\mathcal{I}(S,X)=2(\nabla_{X}f)^{2}\geq0,\,\,\,\mathcal{D}(S)=4|\Delta f|^{2},\,\,\,\mathcal{E}(S,X)=4|\Delta f-\nabla_{X}f|^{2}\geq0.
\end{equation*}
\subsection{M\"{u}ller  coupled with harmonic map flow}
Let $(N,h)$ be a fixed Riemannian manifold. The  harmonic-Ricci flow on $M$ introduced by M\"{u}ller in \cite{M1} as follows
\begin{equation*}
\begin{cases}
\frac{\partial }{\partial t}g_{ij}=-2R_{ij}+2\alpha(t)\nabla_{i}f\nabla_{j}f,&\\
\frac{\partial f}{\partial t}=\tau_{g}f,&,(g(0),f(0))=(g_{0},f_{0})
\end{cases}
\end{equation*}
where $\tau_{g}f$ is the tension  field of the map  $f:M\to N$ with respect to the metric $g(t)$ and $\alpha(t)$ is  positive non-increasing real function respect to $t$.  In this case, $S_{ij}=R_{ij}-\alpha(t)\nabla_{i}f\nabla_{j}f$ and $S=R-\alpha(t)|\nabla f|^{2}$. Along this  flow we have
\begin{equation*}
\frac{\partial S}{\partial t}=\Delta S+2|Ric|^{2}+2\alpha(t)|\tau_{g} f|^{2}-(\frac{\partial \alpha(t)}{\partial t})|\nabla f|^{2},\,\,\,\,\,\,2\nabla^{i}S_{il}-\nabla_{l}S+2\alpha(t)\tau_{g} f\nabla_{l} f=0.
\end{equation*}
Therfore we obtain
\begin{equation*}
\mathcal{I}(S,X)=\alpha(t)\nabla^{i}f\nabla^{j}f X_{i}X_{j}=\alpha(t)(\nabla_{X}f)^{2}\geq0,\,\,\,\mathcal{D}(S)=2\alpha(t)|\tau_{g} f|^{2}-(\frac{\partial \alpha(t)}{\partial t})|\nabla f|^{2}.
\end{equation*}
and
\begin{equation*}
\mathcal{E}(S,X)=2\alpha(t)|\tau_{g} f-\nabla_{X}f|^{2}-(\frac{\partial \alpha(t)}{\partial t})|\nabla f|^{2}.
\end{equation*}
Thus $\mathcal{E}(S,X)\geq 0$ is holds if $\alpha(t)\geq 0$ and  $\alpha(t)\geq 0$  be an non-increasing function. Notice, to the best our knowledge, it is still unknown wether $\mathcal{H}(S,X)\geq 0$  is preserved by the under harmonic-Ricci flow in particular case extended Ricci flow under  suitable assumptions.
\section{Proofs of the results}
In this section, we suppose that $u$ is smooth positive solution  to equation (\ref{6}) and $v=\frac{p}{p-1}u^{p-1}$. In the order to prove the main results, we need the following  lemmas and proposition.
\begin{lemma}
Let $(M, g(t))$ be a complete solution to the geometric flow (\ref{1}) in some time interval $[0,T]$. Suppose that $v$ is  a positive solution of (\ref{7}),

 \begin{equation}\label{8}
\mathcal{L}=\frac{\partial }{\partial t} -(p-1)v\Delta
\end{equation}
and
\begin{eqnarray}\label{9}
F&=&\frac{|\nabla v|^{2}}{v}-b\frac{v_{t}}{v}+(1-b)\frac{S}{v}-\frac{d}{t}\\\nonumber
&=&-b(p-1)\Delta v+(1-b)\frac{|\nabla v|^{2}}{v}-b(p-1)S+(1-b)\frac{S}{v}-\frac{d}{t}.
\end{eqnarray}
then for any constants $b,d$ we have
\begin{eqnarray}\nonumber
\mathcal{L}(F)&=&2p\nabla_{i}F\nabla_{i}v-[\frac{1-b}{v}+p-1]\big(\frac{\partial S}{\partial t}-2\nabla_{i}S\nabla^{i}v+2S^{ij}\nabla_{i}v\nabla_{j}v\big)\\\nonumber
&&-2(p-1)(R^{ij}-S^{ij})\nabla_{i}v\nabla_{j}v-2(p-1)|\nabla^{2}v+\frac{b}{2}S_{ij}|^{2}\\\label{l1}&&+\frac{(b-2)^{2}}{2}(p-1)|S_{ij}|^{2}+(p-1)(1-b)\mathcal{D}(S)\\\nonumber
&&-\frac{1}{b}F^{2}-[(p-1)S-\frac{2(1-b)}{b}\frac{S}{v}+\frac{2d}{bt}]F-\frac{(1-b)^{2}}{b}\frac{S^{2}}{v^{2}}+(1-b)\frac{|\nabla v|^{2}}{v^{2}}S\\\nonumber
&&+\frac{1-b}{b}\frac{|\nabla v|^{4}}{v^{2}}-\frac{d^{2}}{bt^{2}}+\frac{2d}{bt}(1-b)\frac{|\nabla v|^{2}}{v}-d(p-1)\frac{S}{t}+2\frac{1-b}{b}\frac{d}{t}\frac{S}{v}\\\nonumber&&
-2\frac{1-b}{b}\frac{d}{t}+\frac{d}{t^{2}}-b(p-1)(2\nabla^{i}S_{il}-\nabla_{l}S)\nabla^{l}v.
\end{eqnarray}
\end{lemma}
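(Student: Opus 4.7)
The plan is to verify (\ref{l1}) by direct computation: substitute $v_t = (p-1)v\Delta v + |\nabla v|^2 + (p-1)Sv$ from (\ref{7}) into $F$ to reach the second expression in (\ref{9}), then apply $\mathcal{L}=\partial_t-(p-1)v\Delta$ term by term and regroup. Working from the $\Delta v$, $|\nabla v|^2/v$, $S$, $S/v$ form of $F$ avoids the $v_t/v$ term and makes the structure of the target identity transparent: the four scalar building blocks evolve by standard formulas that can be applied separately and recombined.

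For the time derivative I would use that $\partial_t g^{ij}=2S^{ij}$ under (\ref{1}) together with the Hamilton-type commutator
\[
\partial_t(\Delta v)=\Delta v_t+2S^{ij}\nabla_i\nabla_j v+(2\nabla^i S_{ij}-\nabla_j S)\nabla^j v,
\]
the metric-evolution rule $\partial_t|\nabla v|^2=2S^{ij}\nabla_i v\nabla_j v+2\langle\nabla v,\nabla v_t\rangle$, and $\partial_t(1/v)=-v_t/v^2$. These produce the last line of (\ref{l1}), the $\partial_t S$ piece of $\mathcal{D}(S)$, and the $\mathcal{H}$-shaped bracket multiplying $\frac{1-b}{v}+(p-1)$ that carries $\partial_t S-2\nabla_i S\nabla^i v+2S^{ij}\nabla_i v\nabla_j v$. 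For $\Delta F$ I would apply the Bochner identity
\[
\Delta|\nabla v|^2=2|\nabla^2 v|^2+2\langle\nabla v,\nabla\Delta v\rangle+2\,\mathrm{Ric}(\nabla v,\nabla v)
\]
to the $(1-b)|\nabla v|^2/v$ term and the ordinary quotient rule to $(1-b)S/v$ and to $1/v$ factors elsewhere. Combined with the $2S^{ij}\nabla_i v\nabla_j v$ term from step one, the Ricci contribution yields the $-2(p-1)(R^{ij}-S^{ij})\nabla_i v\nabla_j v$ piece; the $-\Delta S$ contribution completes $\mathcal{D}(S)$.

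The Hessian contributions $-2(p-1)|\nabla^2 v|^2$ and the cross-term $-2b(p-1)S^{ij}\nabla_i\nabla_j v$ coming from $\partial_t(-b(p-1)\Delta v)$ combine into the completed square
\[
-2(p-1)\bigl|\nabla^2 v+\tfrac{b}{2}S_{ij}\bigr|^{2}+\tfrac{b^2}{2}(p-1)|S_{ij}|^{2},
\]
with the residual $\tfrac{b^2}{2}(p-1)|S_{ij}|^2$ combining with the $-2(p-1)(1-b)|S_{ij}|^2$ inside $\mathcal{D}(S)$ expansion to give exactly $\tfrac{(b-2)^2}{2}(p-1)|S_{ij}|^2$. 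The remaining zero- and first-order leftovers are then packaged by adding and subtracting $F^2/b$: expanding $F^2$ directly from the second form in (\ref{9}) reproduces all the mixed products $\frac{|\nabla v|^4}{v^2}$, $\frac{S^2}{v^2}$, $\frac{d}{t}\frac{S}{v}$, $\frac{d^2}{t^2}$, and $\frac{d}{t}\frac{|\nabla v|^2}{v}$ appearing in the last three lines of (\ref{l1}), leaving only the linear-in-$F$ term displayed there.

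The gradient term $2p\nabla_i F\nabla_i v$ is the collector of every inner product $\langle\nabla v,\nabla(\cdot)\rangle$ that survives the Bochner and quotient-rule steps, namely of $\Delta v$, $|\nabla v|^2/v$, $S$, and $S/v$; the factor $p$ (rather than $p-1$) emerges only after all four are summed, and is the cleanest internal consistency check. The main obstacle is the purely algebraic bookkeeping: roughly a dozen intermediate expressions must collapse to produce the three nontrivial groupings on the right of (\ref{l1}) — the $\mathcal{D}(S)$ packaging, the $\mathcal{H}$-shaped bracket, and the square completion — while the lower-order scalars match coefficient-by-coefficient. I would organize the calculation by tagging each monomial by its scalar type ($1$, $1/v$, $\Delta v$, $|\nabla v|^2/v$, $S$, $S/v$, $1/t$, and products thereof) and checking the tally in each class against the right-hand side of (\ref{l1}).
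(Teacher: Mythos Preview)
Your approach is essentially identical to the paper's: compute $\mathcal{L}$ of each summand in the second form of $F$ using the evolution formulas for $\Delta v$ and $|\nabla v|^2$ together with Bochner, then complete the Hessian square and regroup the scalar leftovers via $F^2/b$ exactly as the paper does. One small slip in your sketch: the $|S_{ij}|^2$ term arising from replacing $\Delta S$ by $\partial_t S-2|S_{ij}|^2-\mathcal{D}(S)$ carries coefficient $+2(p-1)(1-b)$, not $-2(p-1)(1-b)$, and it is $\tfrac{b^2}{2}+2(1-b)=\tfrac{(b-2)^2}{2}$ that yields the stated residual.
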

\begin{proof}
First of all, we have the following evolution equations, under the flow (\ref{1}),
\begin{eqnarray}\label{10}
\frac{\partial}{\partial t}(\Delta v)&=&2S^{ij}\nabla_{i}\nabla_{j}v+\Delta(v_{t})-g^{ij}\frac{\partial}{\partial t}(\Gamma_{ij}^{k})\nabla_{k}v\\\label{11}
\frac{\partial}{\partial t}|\nabla v|^{2}&=&2S^{ij}\nabla_{i}v\nabla_{j}v+2\nabla^{i}v_{t}\nabla_{i}v\\\label{12}
g^{ij}\frac{\partial}{\partial t}\Gamma_{ij}^{k}&=&-g^{kl}(2\nabla^{i}S_{il}-\nabla_{l}S).
\end{eqnarray}
Then  from (\ref{5}), (\ref{10}) and (\ref{12}) we get
\begin{eqnarray}\nonumber
\frac{\partial}{\partial t}(\Delta v)&=&2S^{ij}\nabla_{i}\nabla_{j}v+(p-1)v\Delta^{2}v+(p-1)(\nabla v)^{2}+2(p-1)\nabla_{i}(\Delta v)\nabla^{i}v\\\label{13}
&&+\Delta |\nabla v|^{2}+(p-1)\Delta (Sv)+(2\nabla^{i}S_{il}-\nabla_{l}S)\nabla^{l}v.
\end{eqnarray}
Using the Bochner- Weitzebnb\"{o}ck formula
\begin{equation*}
\frac{1}{2}\Delta |\nabla v|^{2}=\nabla_{i}(\Delta v)\nabla^{i}v+|\nabla^{2} v|^{2}+R^{ij}\nabla_{i}v\nabla_{j}v,
\end{equation*}
we obtain
\begin{eqnarray}\nonumber
\mathcal{L}(\Delta v)&=&2p\nabla_{i}(\Delta v)\nabla^{i}v+2S^{ij}\nabla_{i}\nabla_{j}v+(p-1)(\nabla v)^{2}+2|\nabla^{2} v|^{2}+2R^{ij}\nabla_{i}v\nabla_{j}v\\\nonumber
&&+(p-1)v\Delta S+2(p-1)\nabla_{i}S\nabla^{i}v+(p-1)S\Delta v\\\label{14}
&&+(2\nabla^{i}S_{il}-\nabla_{l}S)\nabla^{l}v
\end{eqnarray}
On the other hand, again (\ref{5}) results that
\begin{eqnarray}\nonumber
\mathcal{L}( |\nabla v|^{2})&=&2S^{ij}\nabla_{i}v\nabla_{j}v+2(p-1)|\nabla v|^{2}\Delta v+2\nabla_{i}|\nabla v|^{2}\nabla^{i}v+2(p-1)v\nabla_{i}S\nabla^{i}v\\\label{15}
&&+2(p-1)S|\nabla v|^{2}-2(p-1)v|\nabla^{2} v|^{2}-2(p-1)vR^{ij}\nabla_{i}v\nabla_{j}v,
\end{eqnarray}
it follows that
\begin{eqnarray}\nonumber
\mathcal{L}( \frac{|\nabla v|^{2}}{v})&=&\frac{1}{v}\mathcal{L}( |\nabla v|^{2})-\frac{|\nabla v|^{2}}{v^{2}}\mathcal{L}(v)+2(p-1)\nabla_{i}(\frac{|\nabla v|^{2}}{v})\nabla^{i}v\\\nonumber
&=&2p\nabla_{i}(\frac{|\nabla v|^{2}}{v})\nabla^{i}v+\frac{2}{v}S^{ij}\nabla_{i}v\nabla_{j}v+2(p-1)\frac{|\nabla v|^{2}}{v}\Delta v+\frac{|\nabla v|^{4}}{v^{2}}\\\nonumber
&&+2(p-1)\nabla_{i}S\nabla^{i}v+(p-1)\frac{|\nabla v|^{2}}{v}S-2(p-1)|\nabla^{2} v|^{2}\\\label{16}
&&-2(p-1)R^{ij}\nabla_{i}v\nabla_{j}v.
\end{eqnarray}
Also, we obtain
\begin{equation}\label{17}
\mathcal{L}(\frac{S}{v})=2p\nabla_{i}\frac{S}{v}\nabla^{i}v+\frac{|\nabla v|^{2}}{v^{2}}S-\frac{2}{v}\nabla_{i}S\nabla^{i}v+\frac{1}{v}\frac{\partial S}{\partial t}-(p-1)\frac{S^{2}}{v}-(p-1)\Delta S.
\end{equation}
From  (\ref{9}), (\ref{14}), (\ref{16}) and (\ref{17}) we get
\begin{eqnarray}\nonumber
\mathcal{L}(F)&=&(1-b)\mathcal{L}( \frac{|\nabla v|^{2}}{v})-b(p-1)\mathcal{L}(\Delta v)-b(p-1)\mathcal{L}(S)+(1-b)\mathcal{L}(\frac{S}{v})-\mathcal{L}(\frac{d}{t})\\\nonumber
&=&2p\nabla_{i}F\nabla_{i}v+\frac{1-b}{v}\big(\frac{\partial S}{\partial t}-2\nabla_{i}S\nabla^{i}v+2S^{ij}\nabla_{i}v\nabla_{j}v\big)-2(p-1)|\nabla ^{2}v|^{2}\\
&&-(p-1)\big(b\frac{\partial S}{\partial t}+(1-b)\Delta S-2\nabla_{i}S\nabla^{i}v+2R^{ij}\nabla_{i}v\nabla_{j}v \big)\\\nonumber
&&-2b(p-1)S^{ij}\nabla_{i}\nabla_{j}v-b(p-1)^{2}(\Delta v)^{2}+2(1-b)(p-1)\frac{|\nabla v|^{2}}{v}\Delta v\\\nonumber
&&-b(p-1)^{2}S\Delta v+(1-b)(p-1)\frac{|\nabla v|^{2}}{v}S+(1-b)\frac{|\nabla v|^{4}}{v^{2}}+(1-b)\frac{|\nabla v|^{2}}{v^{2}}S\\\nonumber
&&-(1-b)(p-1)
\frac{S^{2}}{v}-b(p-1)(2\nabla^{i}S_{il}-\nabla_{l}S)\nabla^{l}v+\frac{d}{t^{2}}.
\end{eqnarray}
Since $\Delta S=\frac{\partial S}{\partial t}-2|S_{ij}|^{2}-\mathcal{D}(S)$ and
\begin{eqnarray*}
&&-b(p-1)^{2}(\Delta v)^{2}+2(1-b)(p-1)\frac{|\nabla v|^{2}}{v}\Delta v-b(p-1)^{2}S\Delta v\\&&+(1-b)(p-1)\frac{|\nabla v|^{2}}{v}S
+(1-b)\frac{|\nabla v|^{4}}{v^{2}}+(1-b)\frac{|\nabla v|^{2}}{v^{2}}S-(1-b)(p-1)\frac{S^{2}}{v}\\
&=&-\frac{1}{b}\big( -F+(1-b)\frac{|\nabla v|^{2}}{v}-b(p-1)S+(1-b)\frac{S}{v}-\frac{d}{t}\big)^{2}\\
&&-2\frac{1-b}{b}(p-1)\big( F-(1-b)\frac{|\nabla v|^{2}}{v}+b(p-1)S-(1-b)\frac{S}{v}+\frac{d}{t}\big)\\
&&+(p-1)S\big( F-(1-b)\frac{|\nabla v|^{2}}{v}+b(p-1)S-(1-b)\frac{S}{v}+\frac{d}{t}\big)\\
&&+(1-b)(p-1)\frac{|\nabla v|^{2}}{v}S
+(1-b)\frac{|\nabla v|^{4}}{v^{2}}+(1-b)\frac{|\nabla v|^{2}}{v^{2}}S-(1-b)(p-1)\frac{S^{2}}{v}\\
&=&-\frac{1}{b}F^{2}-[(p-1)S-\frac{2(1-b)}{b}\frac{S}{v}+\frac{2d}{bt}]F-\frac{(1-b)^{2}}{b}\frac{S^{2}}{v^{2}}+(1-b)\frac{|\nabla v|^{2}}{v^{2}}S\\
&&+\frac{1-b}{b}\frac{|\nabla v|^{4}}{v^{2}}-\frac{d^{2}}{bt^{2}}+\frac{2d}{bt}(1-b)\frac{|\nabla v|^{2}}{v}-d(p-1)\frac{S}{t}+2\frac{1-b}{b}\frac{d}{t}\frac{S}{v}-2\frac{1-b}{b}\frac{d}{t},
\end{eqnarray*}
we have
\begin{eqnarray}\nonumber
\mathcal{L}(F)&=&2p\nabla_{i}F\nabla_{i}v+\frac{1-b}{v}\big(\frac{\partial S}{\partial t}-2\nabla_{i}S\nabla^{i}v+2S^{ij}\nabla_{i}v\nabla_{j}v\big)\\\nonumber
&&-(p-1)\big(\frac{\partial S}{\partial t}-2\nabla_{i}S\nabla^{i}v+2R^{ij}\nabla_{i}v\nabla_{j}v \big)+2(p-1)(1-b)|S_{ij}|^{2}\\\label{18}&&+(p-1)(1-b)\mathcal{D}(S)-2(p-1)|\nabla ^{2}v|^{2}-2b(p-1)S^{ij}\nabla_{i}\nabla_{j}v\\\nonumber
&&-\frac{1}{b}F^{2}-[(p-1)S-\frac{2(1-b)}{b}\frac{S}{v}+\frac{2d}{bt}]F-\frac{(1-b)^{2}}{b}\frac{S^{2}}{v^{2}}+(1-b)\frac{|\nabla v|^{2}}{v^{2}}S\\\nonumber
&&+\frac{1-b}{b}\frac{|\nabla v|^{4}}{v^{2}}-\frac{d^{2}}{bt^{2}}+\frac{2d}{bt}(1-b)\frac{|\nabla v|^{2}}{v}-d(p-1)\frac{S}{t}+2\frac{1-b}{b}\frac{d}{t}\frac{S}{v}\\\nonumber&&
-2\frac{1-b}{b}\frac{d}{t}+\frac{d}{t^{2}}-b(p-1)(2\nabla^{i}S_{il}-\nabla_{l}S)\nabla^{l}v.
\end{eqnarray}
Evolution  equation (\ref{18}) results that (\ref{l1}).
\end{proof}
\begin{definition}
Suppose that $g(t)$ evolves by (\ref{1}). Let $S$ be the trace of $S_{ij}$ and $X=X^{i}\frac{\partial }{\partial x^{i}}$ be  a vector field on $M$. We define
\begin{equation*}
\mathcal{E}_{b}(S,X)=(b-1)\mathcal{D}(S)+2\mathcal{I}(S,X)+b(2\nabla^{i}S_{ij}-\nabla_{j}S)X^{j}
\end{equation*}
where $b$ is a constant.
\end{definition}
\begin{proposition}\label{p1}
Let $g(t)$, $t\in[0,T)$ be a solution to the geometric flow (\ref{1}) on a closed   Riemannian $n$-manifold $M$ satisfying
\begin{equation}
\mathcal{E}_{b}(S,X)\geq0,\,\,\,\mathcal{H}(S,X)\geq0,\,\,\,Ric\geq -(n-1)k_{1},\,\,\,\,-k_{2}g\leq S_{ij}\leq k_{3}g,\,\,\, S\geq 0
\end{equation}
for all vector fields $X$ and  all time $t\in[0,T)$. Suppose $u$ is  a smooth positive  solution  to eqaution (\ref{6}) with $p>1$  and $v=\frac{p}{p-1}u^{p-1}$. Then
for any $b\in[2,\infty)$ and $d\geq b$, on the geodesic ball $\mathcal{Q}_{\rho,T}$, we have
\begin{equation}\label{pp1}
\frac{|\nabla v|^{2}}{v}-b\frac{v_{t}}{v}-(b-1)\frac{S}{v}-\frac{d}{t}\leq b\alpha(\frac{E_{4}v_{\max}}{\rho^{2}}+E_{5})+E_{6}
\end{equation}
where $\alpha=\frac{bn(p-1)}{2+bn(p-1)}$, $E_{4}=\big(\frac{b^{2}p^{2}n}{4(b-1)}+\frac{\sqrt{k_{1}}\rho}{2}+\frac{9}{4} \big)c_{3}(p-1)$,
$E_{5}=\sqrt{c_{4}}(k_{2}+k_{3})^{2}+\frac{2(b-2)}{b}(k_{2}+k_{3})+1$ and $E_{6}=n(k_{2}+k_{3})(b-2)\sqrt{\frac{b(p-1)\alpha}{2}}$.
\end{proposition}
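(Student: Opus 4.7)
The plan is to apply the Li--Yau cutoff maximum principle to the evolution formula for $\mathcal{L}(F)$ derived in the Lemma. First, I would regroup the right-hand side of (\ref{l1}) by evaluating the tensor quantities at $X=\nabla v$. The block $\tfrac{\partial S}{\partial t}-2\nabla_{i}S\nabla^{i}v+2S^{ij}\nabla_{i}v\nabla_{j}v$ equals $\mathcal{H}(S,\nabla v)-\tfrac{S}{t}$, so the hypothesis $\mathcal{H}(S,\nabla v)\geq 0$ bounds this block from below by $-\tfrac{S}{t}$. The combination $(p-1)(1-b)\mathcal{D}(S)-2(p-1)(R^{ij}-S^{ij})\nabla_{i}v\nabla_{j}v-b(p-1)(2\nabla^{i}S_{il}-\nabla_{l}S)\nabla^{l}v$ collapses to $-(p-1)\mathcal{E}_{b}(S,\nabla v)\leq 0$, a clean cancellation that is the whole point of introducing $\mathcal{E}_{b}$. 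Discarding these nonpositive contributions and bookkeeping the remaining $S/t$-type pieces leaves a manageable upper bound on $\mathcal{L}(F)$.

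Second, I would handle the Hessian term via the trace inequality $|\nabla^{2}v+\tfrac{b}{2}S_{ij}|^{2}\geq \tfrac{1}{n}(\Delta v+\tfrac{b}{2}S)^{2}$, then use the definition of $F$ to express $(p-1)\Delta v$ as a linear combination of $F$, $|\nabla v|^{2}/v$, $S/v$, $S$, and $d/t$. The resulting quadratic in $\Delta v$ reorganizes into a second negative quadratic in $F$ which, combined with the $-\tfrac{1}{b}F^{2}$ already present in (\ref{l1}), produces a leading coefficient whose reciprocal gives the factor $\alpha=\tfrac{bn(p-1)}{2+bn(p-1)}$. The residual $\tfrac{(b-2)^{2}}{2}(p-1)|S_{ij}|^{2}$ and the cross-trace $-b(p-1)S^{ij}\nabla_{i}\nabla_{j}v$ (split off from the square completion) are controlled by the two-sided bound $-k_{2}g\leq S_{ij}\leq k_{3}g$ and Young's inequality; the $(b-2)$ factor in $E_{6}$ is the signature of the cross-trace term, which vanishes at $b=2$ and recovers Theorem \ref{t1}.

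Third, to localize to $\mathcal{Q}_{\rho,T}$, I would introduce a standard Calabi-type spatial cutoff $\phi$ supported in the geodesic ball of radius $\rho$. Under $\mathrm{Ric}\geq -(n-1)k_{1}$, Laplacian comparison yields $|\nabla\phi|^{2}/\phi\leq C/\rho^{2}$ and $\Delta\phi\geq -C(\sqrt{k_{1}}\rho+1)/\rho^{2}$, and these are exactly what produce the $\tfrac{b^{2}p^{2}n}{4(b-1)}$, $\tfrac{\sqrt{k_{1}}\rho}{2}$, and $\tfrac{9}{4}$ pieces of $E_{4}$. At a spacetime maximum point $(x_{0},t_{0})$ of $\phi F$ on $M\times[0,T)$, the relations $\nabla(\phi F)=0$ and $\mathcal{L}(\phi F)\geq 0$ convert the evolution inequality into a quadratic in $(\phi F)(x_{0},t_{0})$; solving it and then evaluating $\phi F$ at an arbitrary interior point of $\mathcal{Q}_{\rho,T}$ yields (\ref{pp1}).

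The main obstacle will be the algebraic bookkeeping in steps two and three. The gradient term $2p\nabla_{i}F\nabla^{i}v$ in (\ref{l1}) must be paired against the cutoff contributions $-F\Delta\phi/\phi$ and $(p-1)vF|\nabla\phi|^{2}/\phi^{2}$, while the $|\nabla v|^{4}/v^{2}$ term is absorbed into the Hessian square via the positive factor $(b-1)\geq 1$, all without disturbing the $\alpha$-coefficient. Ensuring that the prefactor $v_{\max}/\rho^{2}$ emerges cleanly (rather than with a higher power of $v$) is the most delicate part, and relies on combining $F\geq 0$ at the maximum (otherwise the estimate is trivial) with careful tracking of every $v$-dependent Young-inequality estimate.
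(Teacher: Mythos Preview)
Your outline matches the paper's proof closely: group the terms in $\mathcal{L}(F)$ via $\mathcal{H}(S,\nabla v)$ and $\mathcal{E}_b(S,\nabla v)$, replace $|\nabla^2 v+\tfrac{b}{2}S_{ij}|^2$ by $\tfrac1n(\Delta v+\tfrac{b}{2}S)^2$, localize with a cutoff, and solve the resulting quadratic at the maximum. Three small discrepancies deserve attention. First, the paper works with $t\phi F$ rather than $\phi F$, rewriting everything through $\tilde{y}=t\phi\bigl(\tfrac{|\nabla v|^2}{v}+\tfrac{S}{v}\bigr)$ and $\tilde{z}=t\phi\bigl(\tfrac{v_t}{v}+\tfrac{S}{v}+\tfrac{d}{bt}\bigr)$ so that $t\phi F=\tilde{y}-b\tilde{z}$; the extra factor of $t$ absorbs the $-d/t$ singularity and is what produces the clean final constants. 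Second, because $g(t)$ evolves, the cutoff $\phi=\psi(r(x,t)/2\rho)$ has $\phi_t\neq 0$; the paper bounds this via $\partial_t r=-\int_\gamma S_{ij}(X,X)\,ds$ together with $-k_2g\leq S_{ij}\leq k_3g$, and this is precisely the source of the $\sqrt{c_4}(k_2+k_3)^2$ piece of $E_5$, which your list of purely spatial Laplacian-comparison bounds omits. Third, your remark about splitting off a separate ``cross-trace $-b(p-1)S^{ij}\nabla_i\nabla_j v$'' is unnecessary: that term has already been absorbed into the completed square $|\nabla^2 v+\tfrac{b}{2}S_{ij}|^2$ in the lemma, and the only residual tensor piece is $\tfrac{(b-2)^2}{2}(p-1)|S_{ij}|^2\leq \tfrac{(b-2)^2}{2}(p-1)n(k_2+k_3)^2$, which contributes the $E_6$ term after solving the quadratic.
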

\begin{proof}
Let $x,x_{0}$ and $d(x,x_{0},t)$ be the geodesic distance  $x$ from $x_{0}$ with respect to the metric  $g(t)$. Choose a smooth cut-off function $\psi(s)$ defined on $[0,+\infty)$ with $\psi(s)=1$ for $0\leq s\leq \frac{1}{2}$, $\psi(s)=0$ for $1\leq s$ and $\psi(s)>0$ for $\frac{1}{2}< s<1 $ such that $-c_{1}\psi^{\frac{1}{2}}\leq \psi'(s)\leq0$, $-c_{2}\leq \psi''(s)\leq c_{2}$ and  $-c_{2}\psi\leq|\psi'|^{2}\leq c_{2}\psi$ for some absolute constants $c_{1},c_{2}>0$. For any fixed point  $x_{0}\in M$  and any positive number $\rho>0$, we define $\phi(x,t)=\psi(\frac{r(x,t)}{2\rho})$ on
\begin{equation}
\mathcal{Q}_{\rho,T}=B(x_{0},2\rho)\times[0,T)\subset M\times[0,+\infty)
\end{equation}
where $B(x_{0},2\rho)$ is a ball of radius $2\rho>0$ centered at $x_{0}$ and $r(x,t)=d(x,x_{0},t)$. Using an argument of Calabi \cite{EC}, we can assume every where  smooth ness of $\phi(x,t)$ with  support in $\mathcal{Q}_{\rho,T}$. By the Laplacian comparison theorem in \cite{TA}, the Laplacian  of the distance function satisfies
\begin{equation}\label{19}
\Delta r(x,t)\leq (n-1)\sqrt{|k_{1}|}\coth (2\sqrt{|k_{1}|}\rho),\,\,\,\,\,\forall x\in M,\,\,\,d(x,x_{0})\geq2\rho.
\end{equation}
From the definition of $\phi$ and direct calculation  shows that
\begin{equation*}
\frac{|\nabla \phi|^{2}}{\phi}=\frac{|\psi'|^{2}|\nabla r|^{2}}{4\rho^{2}}\leq \frac{c_{1}}{\rho^{2}},
\end{equation*}
and
\begin{equation*}
\Delta \phi=\frac{\psi' \Delta r}{2\rho}+\frac{\psi''|\nabla r|^{2}}{4\rho^{2}}
\geq -\frac{c_{1}}{2\rho}(n-1)\sqrt{|k_{1}|}\coth (2\sqrt{|k_{1}|}\rho)-\frac{c_{1}}{4\rho^{2}}\geq -\frac{c_{1}\sqrt{|k_{1}|}}{2\rho}-\frac{c_{1}}{4\rho^{2}}.
\end{equation*}
On the other hand, since along the geometric flow (\ref{1}), for  a fixed smooth path $\gamma:[a,b]\to M$ whose length at time  $t$ is given by
$d(\gamma)=\int_{a}^{b}|\gamma'(s)|_{g(t)}ds$,  where $s$ is the arc length  along the path, we have
\begin{equation*}
\frac{\partial d(\gamma)}{\partial t}=-\frac{1}{2}\int_{a}^{b}|\gamma'(s)|_{g(t)}^{-1}S_{ij}(X,X)ds
\end{equation*}
where $X$ is the unit tangent vector  to the path $\gamma$. $-k_{2}g \geq S_{ij}\leq k_{3}g$ results that $-(k_{2}+k_{3})g\leq S_{ij}\leq(k_{2}+k_{3})g $, then
\begin{equation*}
\sup_{M}|S_{ij}|^{2}\leq n(k_{2}+k_{3})^{2}.
\end{equation*}
Now, we get
\begin{equation*}
\frac{\partial \phi}{\partial t}=\frac{\psi'}{2\rho}\frac{\partial r}{\partial t} =\frac{\psi'}{2\rho}\int_{\gamma}S_{ij}(X,X)ds\leq \sqrt{c_{2}}(k_{2}+k_{3})^{2}.
\end{equation*}
Suppose that $t\phi F$ achieves its positive maximum value  at $(v_{0},t_{0})$. Then at $(x_{0},t_{0})$, we have
\begin{equation*}
\nabla(t\phi F)(x_{0},t_{0})=0,\,\,\,\,\frac{\partial }{\partial t}(t\phi F)(x_{0},t_{0})\geq0,\,\,\,\,\,\mathcal{L}(t\phi F)(x_{0},t_{0})\geq 0.
\end{equation*}
Suppose that
\begin{equation*}
y=\frac{|\nabla v|^{2}}{v}+\frac{S}{v},\,\,\,\tilde{y}=t\phi y, \,\,\,\,z=\frac{v_{t}}{v}+\frac{S}{v}+\frac{d}{bt},\,\,\,\tilde{z}=t\phi z
\end{equation*}
then $F=y-bz$, $t\phi F=\tilde{y}-b\tilde{z}$ and
\begin{eqnarray}\nonumber
\mathcal{L}(F)&=&2p\nabla_{i}F\nabla_{i}v-[\frac{1-b}{v}+p-1]\big(\frac{\partial S}{\partial t}-2\nabla_{i}S\nabla^{i}v+2S^{ij}\nabla_{i}v\nabla_{j}v\big)\\\nonumber
&&-2(p-1)(R^{ij}-S^{ij})\nabla_{i}v\nabla_{j}v-2(p-1)|\nabla^{2}v+\frac{b}{2}S_{ij}|^{2}\\\label{20}&&+\frac{(b-2)^{2}}{2}(p-1)|S_{ij}|^{2}+(p-1)(1-b)\mathcal{D}(S)-\frac{1}{b}F^{2}\\\nonumber
&&-[(p-1)S-\frac{2(1-b)}{b}\frac{S}{v}+\frac{2d}{bt}]F-\frac{b-1}{b}y^{2}-\frac{(b-1)(b-2)}{b}y\frac{S}{v}
\\\nonumber
&&-\frac{d^{2}}{bt^{2}}+\frac{2d}{bt}(1-b)\frac{|\nabla v|^{2}}{v}-d(p-1)\frac{S}{t}+2\frac{1-b}{b}\frac{d}{t}\frac{S}{v}-2\frac{1-b}{b}\frac{d}{t}\\\nonumber&&
+\frac{d}{t^{2}}-b(p-1)(2\nabla^{i}S_{il}-\nabla_{l}S)\nabla^{l}v.
\end{eqnarray}
Therefore
\begin{eqnarray}\nonumber
t\phi\mathcal{L}(t\phi F)&=&t\phi^{2}+t^{2}\phi \phi_{t} F+ t^{2} \phi^{2} \mathcal{L}(F)-(p-1)t^{2}\phi v F\Delta \phi \\\nonumber&&-2t^{2}(p-1)\phi v \nabla_{i}\phi \nabla^{i}F\\\nonumber
&=&\phi(\tilde{y}-b\tilde{z})+t\phi_{t}(\tilde{y}-b\tilde{z})-(p-1)tv\Delta \phi (\tilde{y}-b\tilde{z})\\\nonumber
&&
-2t^{2}(p-1)\phi v \nabla_{i}\phi \nabla^{i}F+2pt^{2}\phi^{2}\nabla_{i}F\nabla_{i}v\\\nonumber
&&-t^{2}\phi^{2}[\frac{1-b}{v}+p-1]\mathcal{H}(S,\nabla v)-2(p-1)t^{2}\phi^{2}|\nabla^{2}v+\frac{b}{2}S_{ij}|^{2}\\\label{21}&&+\frac{(b-2)^{2}}{2}(p-1)t^{2}\phi^{2}|S_{ij}|^{2}-\frac{1}{b}(\tilde{y}-b\tilde{z})^{2}\\\nonumber
&&\\\nonumber&&-[(p-1)S-\frac{2(1-b)}{b}\frac{S}{v}+\frac{2d}{bt}]t^{2}\phi^{2}F-\frac{b-1}{b}\tilde{y}^{2}
\\\nonumber
&&-\frac{(b-1)(b-2)}{b}t\phi\frac{S}{v}\tilde{y}+\frac{2d}{b}(1-b)t\phi^{2}\frac{|\nabla v|^{2}}{v}\\\nonumber&&
-d(p-1)t\phi^{2}S+2\frac{1-b}{b}d t\phi^{2}\frac{S}{v}-2\frac{1-b}{b}d t\phi^{2}\\\nonumber
&&
-t^{2}\phi^{2}(p-1)\mathcal{E}_{b}(S,\nabla v)-\frac{d}{b}(d-b)\phi^{2}.
\end{eqnarray}
On the other hand, we have
\begin{eqnarray*}
t_{0}^{2}\phi^{2}\nabla_{i}F\nabla^{i}v=-t_{0}^{2}\phi F\nabla_{i}\phi\nabla^{i}v\leq t_{0}^{2}\phi F|\nabla_{i}\phi| |\nabla^{i}v|\leq \frac{\sqrt{c_{1}}}{\rho}\tilde{y}^{\frac{1}{2}}(t_{0}v)^{\frac{1}{2}}(\tilde{y}-b\tilde{z}),
\\
-(p-1)t_{0}v\Delta \phi (\tilde{y}-b\tilde{z})\leq (p-1)t_{0}v(\frac{c_{1}\sqrt{k_{1}}}{2\rho}+\frac{c_{1}}{4\rho^{2}})(\tilde{y}-b\tilde{z}),
\\
-2(p-1)t_{0}^{2}v\phi\nabla_{i}\phi\nabla^{i}F=2(p-1)t_{0}^{2}v|\nabla \phi|^{2}F\leq 2(p-1)t_{0}v\frac{c_{1}}{\rho^{2}}(\tilde{y}-b\tilde{z}),
\end{eqnarray*}
and
\begin{eqnarray}\nonumber
&&
-2(p-1)t_{0}^{2}\phi^{2}|\nabla^{2}v+\frac{b}{2}S_{ij}|^{2}
\leq -\frac{2(p-1)t_{0}^{2}\phi^{2}}{n}(\Delta v+\frac{b}{2}S)^{2}\\
&=&-\frac{2(p-1)t_{0}^{2}\phi^{2}}{n}\left(-\frac{F}{b(p-1)}-\frac{b-1}{b(p-1)}\frac{|\nabla v|^{2}}{v}+\frac{b-2}{2}S-\frac{b-1}{b(p-1)}\frac{S}{v}\right)^{2}\\\nonumber
&=&-\frac{2}{b^{2}n(p-1)}\left(\tilde{y}-b\tilde{z}+(b-1)\tilde{y}-\frac{b(b-2)}{2}(p-1)t_{0}\phi S\right)^{2}.
\end{eqnarray}
Thus
\begin{eqnarray}\nonumber
0&\leq& t_{0}\phi \mathcal{L}(t_{0}\phi F)\\\nonumber&\leq&
(\tilde{y}-b\tilde{z})+t_{0}\sqrt{c_{2}}(k_{2}+k_{3})^{2}(\tilde{y}-b\tilde{z})+
(p-1)t_{0}v(\frac{c_{1}\sqrt{k_{1}}}{2\rho}+\frac{c_{1}}{4\rho^{2}})(\tilde{y}-b\tilde{z})\\&&
+2(p-1)t_{0}v\frac{c_{1}}{\rho^{2}}(\tilde{y}-b\tilde{z})+2p\frac{\sqrt{c_{1}}}{\rho}\tilde{y}^{\frac{1}{2}}(t_{0}v)^{\frac{1}{2}}(\tilde{y}-b\tilde{z})\\\nonumber &&
-\frac{2}{b^{2}n(p-1)}\left(\tilde{y}-b\tilde{z}+(b-1)\tilde{y}-\frac{b(b-2)}{2}(p-1)t_{0}\phi S\right)^{2}\\\nonumber
&&+\frac{(b-2)^{2}}{2}(p-1)t_{0}^{2}\phi^{2}S^{2}-\frac{1}{b}(\tilde{y}-b\tilde{z})^{2}.
\end{eqnarray}
Notice that $(r+s)^{2}\geq r^{2}+2rs$ results that
\begin{eqnarray}\nonumber
&&
-\frac{2}{b^{2}n(p-1)}\left(\tilde{y}-b\tilde{z}+(b-1)\tilde{y}-\frac{b(b-2)}{2}(p-1)t_{0}\phi S\right)^{2}\\\nonumber
&\leq&-\frac{2}{b^{2}n(p-1)}(\tilde{y}-b\tilde{z})^{2}-\frac{4(b-1)}{b^{2}n(p-1)}\tilde{y}(\tilde{y}-b\tilde{z})+\frac{2(b-2)}{bn}t_{0}\phi S (\tilde{y}-b\tilde{z})
\end{eqnarray}
hence
\begin{eqnarray}\nonumber
0&\leq&
(\tilde{y}-b\tilde{z})+t_{0}\sqrt{c_{2}}(k_{2}+k_{3})^{2}(\tilde{y}-b\tilde{z})+
(p-1)t_{0}v(\frac{c_{1}\sqrt{k_{1}}}{2\rho}+\frac{c_{1}}{4\rho^{2}})(\tilde{y}-b\tilde{z})\\\nonumber&&
+2(p-1)t_{0}v\frac{c_{1}}{\rho^{2}}(\tilde{y}-b\tilde{z})+2p\frac{\sqrt{c_{1}}}{\rho}\tilde{y}^{\frac{1}{2}}(t_{0}v)^{\frac{1}{2}}(\tilde{y}-b\tilde{z})-\frac{1}{b\alpha}(\tilde{y}-b\tilde{z})^{2}\\\nonumber &&
-\frac{4(b-1)}{b^{2}n(p-1)}\tilde{y}(\tilde{y}-b\tilde{z})+\frac{2(b-2)}{bn}t_{0}\phi S (\tilde{y}-b\tilde{z})+\frac{(b-2)^{2}}{2}(p-1)t_{0}^{2}\phi^{2}S^{2}\\\nonumber
&\leq& (\tilde{y}-b\tilde{z})\left[-\frac{4(b-1)}{b^{2}n(p-1)}\tilde{y}+2p\frac{\sqrt{c_{1}}}{\rho}\tilde{y}^{\frac{1}{2}}(t_{0}v)^{\frac{1}{2}}+(\frac{c_{1}\sqrt{k_{1}}}{2\rho}+\frac{c_{1}}{4\rho^{2}}+2\frac{c_{1}}{\rho^{2}})(p-1)t_{0}v \right]\\\nonumber&&
+(\tilde{y}-b\tilde{z})\left[ t_{0}\sqrt{c_{2}}(k_{2}+k_{3})^{2}+\frac{2(b-2)}{b}t_{0} (k_{2}+k_{3})+1 \right]\\\nonumber&&+\frac{(b-2)^{2}}{2}(p-1)t_{0}^{2}n^{2}(k_{2}+k_{3})^{2}-\frac{1}{b\alpha}(\tilde{y}-b\tilde{z})^{2}.
\end{eqnarray}
where $\alpha=\frac{bn(p-1)}{2+bn(p-1)}$. For $a>0$  inequality $-ax^{2}+bx\leq \frac{b^{2}}{4a}$ implies that
\begin{equation}
-\frac{4(b-1)}{b^{2}n(p-1)}\tilde{y}+2p\frac{\sqrt{c_{1}}}{\rho}\tilde{y}^{\frac{1}{2}}(t_{0}v)^{\frac{1}{2}}\leq
\frac{b^{2}p^{2}nc_{1}}{4(b-1)\rho^{2}}(p-1)t_{0}v.
\end{equation}
Therefore
\begin{eqnarray}\nonumber
0&\leq& (\tilde{y}-b\tilde{z})\left[\left(\frac{b^{2}p^{2}nc_{1}}{4(b-1)\rho^{2}} +\frac{c_{1}\sqrt{k_{1}}}{2\rho}+\frac{c_{1}}{4\rho^{2}}+2\frac{c_{1}}{\rho^{2}} \right)(p-1)t_{0}v+t_{0}\sqrt{c_{2}}(k_{2}+k_{3})^{2}\right.\\&&\left.+\frac{2(b-2)}{b}t_{0} (k_{2}+k_{3})+1 \right]+\frac{(b-2)^{2}}{2}(p-1)t_{0}^{2}n^{2}(k_{2}+k_{3})^{2}-\frac{1}{b\alpha}(\tilde{y}-b\tilde{z})^{2}.
\end{eqnarray}
If $0\leq -ax^{2}+bx+c$ for $a,b,c>0$ then $x\leq \frac{b}{a}+\sqrt{\frac{c}{a}}$. Hence
\begin{eqnarray}\nonumber
\tilde{y}-b\tilde{z}&\leq&b\alpha\left[\left(\frac{b^{2}p^{2}nc_{1}}{4(b-1)\rho^{2}} +\frac{c_{1}\sqrt{k_{1}}}{2\rho}+\frac{c_{1}}{4\rho^{2}}+2\frac{c_{1}}{\rho^{2}} \right)(p-1)t_{0}v+t_{0}\sqrt{c_{2}}(k_{2}+k_{3})^{2}\right.\\&&\left.+\frac{2(b-2)}{b}t_{0} (k_{2}+k_{3})+1 \right]+t_{0}n(k_{2}+k_{3})(b-2)\sqrt{\frac{b(p-1)\alpha}{2}}.
\end{eqnarray}
If $d(x,x_{0},\tau)<2\rho^{2}$ then $\phi(x,\tau)=1$. Then since $(x_{0},t_{0})$ is the maximum point for $t\phi F$ in $\mathcal{Q}_{\rho,T}$, we have
\begin{equation}
\tau F(x,\tau)=(\tau \phi F)(x,\tau)\leq (t_{0}\phi F)(x_{0},t_{0}).
\end{equation}
For all $x\in M$, such that $d(x,x_{0},\tau)<2\rho^{2}$ and $\tau\in[0,T]$ is arbitrary. Then we have
\begin{eqnarray}\nonumber
F&\leq&b\alpha\left[\left(\frac{b^{2}p^{2}nc_{1}}{4(b-1)\rho^{2}} +\frac{c_{1}\sqrt{k_{1}}}{2\rho}+\frac{c_{1}}{4\rho^{2}}+2\frac{c_{1}}{\rho^{2}} \right)(p-1)v+\sqrt{c_{2}}(k_{2}+k_{3})^{2}\right.\\&&\left.+\frac{2(b-2)}{b} (k_{2}+k_{3})+1 \right]+n(k_{2}+k_{3})(b-2)\sqrt{\frac{b(p-1)\alpha}{2}}.
\end{eqnarray}
This completes the proof.
\end{proof}
{\bf Proof of Theorem  \ref{t1}.}
In Proposition \ref{p1}, suppose that $b=2$. Then inequality (\ref{pp1}) results that ({\ref{ft1}).\\

{\bf Proof of Corollary \ref{c1}.} If $u$ is bounded on  $M\times [0,T]$, then  assume that  $\rho\to \infty$, therefore inequality Theorem \ref{t1} results that (\ref{ft2}).\\

{\bf Proof of Theorem \ref{t2}.} For any curve $\gamma(t),\,\,t\in[t_{1},t_{2}]$, from $\gamma(t_{1})=x_{1}$ to $\gamma(t_{2})=x_{2}$, we have
\begin{equation*}
\log \frac{v(x_{1},t_{1})}{ v(x_{2},t_{2})}=\int_{t_{1}}^{t_{2}}\frac{d}{dt}\log\, v(\gamma(t),t)dt =\int_{t_{1}}^{t_{2}}
\frac{v_{t}}{v}+\frac{\nabla v}{v}\frac{d\gamma}{dt}dt.
\end{equation*}
Since for any $x,y$, inequality $xy\geq -\frac{x^{2}}{2}-\frac{y^{2}}{2}$ results that
\begin{equation*}
\nabla v.\frac{d\gamma}{dt}\geq -\frac{|\nabla v|^{2}}{2}-\frac{1}{2}|\frac{d\gamma}{dt}|^{2}.
\end{equation*}
Hence
\begin{equation*}
\log \frac{v(x_{1},t_{1})}{ v(x_{2},t_{2})}\geq\int_{t_{1}}^{t_{2}}(\frac{v_{t}}{v}-\frac{|\nabla v|^{2}}{2v}-\frac{1}{2v}|\frac{d\gamma}{dt}|^{2})dt.
\end{equation*}
Corollary \ref{c1} implies that
\begin{eqnarray*}
\log \frac{v(x_{1},t_{1})}{ v(x_{2},t_{2})}&\geq&\int_{t_{1}}^{t_{2}}\big(-\frac{n(p-1)}{1+n(p-1)}E_{2}-\frac{S}{2v_{\min}}-\frac{d}{2t}-\frac{1}{2v_{\min}}|\frac{d\gamma}{dt}|^{2} \big)dt\\
&=&-\frac{n(p-1)}{1+n(p-1)}E_{2}(t_{2}-t_{1})-(\frac{t_{2}}{t_{1}})^{\frac{d}{2}}-\frac{1}{2v_{\min}}\int_{t_{1}}^{t_{2}}\big(S+|\frac{d\gamma}{dt}|^{2} \big)dt.
\end{eqnarray*}
By exponentiating we arrive at (\ref{ftt1}).



\begin{thebibliography}{99}
\bibitem{TA} T. Aubin, Nonlinear analysis on manifolds, Monge-Amp\'{r}e equations, Springer, New York,1982.
\bibitem{EC} E. Calabi, An extension of E. Hopf's maximum principle  with an application to Riemannian  geometry, Duke Math. J. 25(1)(1958),  45-56.
\bibitem{HD}H.-D. Cao, On Harnack's inequalities  for the K\"{a}hler-Ricci flow, Invent. Math. ,109(1992), 247-263.
\bibitem{CZ} H. D. Cao, M. Zhu, Aronson-B\'{e}nilan estimates  for  the porous medium equation under the Ricci flow, J. Math. Pures Appl., 104(2015), 729-748.
\bibitem{BC}B. Chow,  On Harnack's inequality and entropy for the Gaussian curvature flow,  Comm. Pure Appl. Math. 44(1991), 469-483.
\bibitem{SF}S. Fang, Differential Harnack estimates for backwards heat eqautions  with potentials under an extended Ricci flow, Adv. Geom. 13(2013), 741 -755.
\bibitem{H1}R. Hamilton, The Harnack estimate for the Ricci flow,  J. Differ. Geom. 37(1993),225-243.
\bibitem{H2}R. Hamilton, The Harnack estimate for the mean curvature flow,  J. Differ. Geom. 41(1995),215-226.
\bibitem{H3}R. Hamilton, There manifolds with positive  Ricci curvature,  J. Differ. Geom. 17(1982),255-306.
\bibitem{MI} M. Ishida, Geometric flows and  differential Harnack estimates for heat eqautions  with potentials, Ann Glob Anal  Geom, 45 (2014), 287-302.
\bibitem{PLY} P. Li, S.-T. Yau, On  the parabolic kernel  of the Schrodinger operator, Acta Math. 156(1986), 153-201.
\bibitem{BL} B. List, Evolution  of an extended  Ricci flow system, PhD thesis, AEI Potsdam (2005).
\bibitem{M} R. M\"{u}ller, Monotone  volume formulas for geometric flows, J.  Rein Angew. Math. 643(2010), 39-57.
\bibitem{M1} R. M\"{u}ller, Ricci flow coupled with  harmonic map flow, Ann. Sci. Ec. Norm. Super. 45(2) (2012), 101-142.
\bibitem{LV} J. L. V\'{a}zquez, The porous medium equation, mathematical theory, Oxford mathematical monographs, The clarendon press oxford university press, oxford, 2007.












\end{thebibliography}
\end{document}